\documentclass[12pt]{article}
\usepackage{amssymb,amsbsy,bm,times,t1enc,url}
%\usepackage[dvips]{graphicx}
%\usepackage{hyperref}

%\usepackage{ilceypap}

% PAGE SIZE AND SUCH: TO BE REPLACED WITH JOURNAL'S STYLE
\setlength{\textwidth}{164mm}
\setlength{\textheight}{225mm}
\setlength{\oddsidemargin}{0mm}
\setlength{\evensidemargin}{0mm}
\setlength{\topmargin}{0mm}
\setlength{\parindent}{0pt}
\setlength{\parskip}{5pt}
\setcounter{topnumber}{3} 
 
\setcounter{bottomnumber}{1}
 
\setcounter{totalnumber}{4}

%\renewcommand{\baselinestretch}{2}

% Graphics and fonts
\ifx\pdfoutput\undefined % NOT PDF
\usepackage[dvips]{graphicx}
\RequirePackage[dvips]{color}
\else
\usepackage[pdftex]{graphicx}
\RequirePackage[pdftex]{color}
\fi

\usepackage{amsfonts}
\usepackage{amssymb}
\usepackage{bm}

%% Other definitions

\definecolor{darkgreen}{rgb}{0,0.3,0}

\newcommand{\pr}[1]{\mathbb{P}\hspace{-0.1em}\left(#1\right)}
\newcommand{\cPr}[2]{\mathbb{P}\hspace{-0.6mm}\left[\left.#1\,\right|#2\right]}
\newcommand{\E}[1]{\mathbb{E}\set{#1}}
\newcommand{\cE}[2]{\mathbb{E}\left[\left.#1\,\right|#2\right]}

\newcommand{\set}[1]{\left\{{#1}\right\}}
\newcommand{\eset}[2]{\left\{{#1} : \: {#2}\right\}}
\newcommand{\indic}[1]{1_{\{#1\}}}
\newcommand{\Nat}{\mathbb{N}}
\renewcommand{\Re}{\mathbb{R}}
\newcommand{\D}{{\,\mathrm{d}}}
\newenvironment{equation*}{\[}{\]}

\newtheorem{proposition}{Proposition}[section]

\newtheorem{theorem}[proposition]{Theorem}
\newtheorem{lemma}[proposition]{Lemma}

\newenvironment{proof}{{\it
Proof}\quad}{\nopagebreak\hspace*{\fill}$\square$\par
\bigbreak}

\begin{document}

%\title{On the robustness of certain power-law random graphs}
\title{On the robustness of power-law random graphs in the finite
mean, infinite variance region}
\author{{Ilkka Norros and Hannu Reittu}\\
VTT Technical Research Centre of Finland}
\date{}
\maketitle

\begin{abstract}
We consider a conditionally Poissonian random graph model where the
mean degrees, `capacities', follow a power-tailed distribution with
finite mean and infinite variance. Such a graph of size $N$ has a
giant component which is super-small in the sense that the typical
distance between vertices is of the order of $\log\log N$. The
shortest paths travel through a core consisting of nodes with high
mean degrees. In this paper we derive upper bounds of the distance
between two random vertices when an upper part of the core is removed,
including the case that the whole core is removed.
\end{abstract}

\section{Introduction}\label{intro} 

Power-law random graphs (or `scale-free random graphs') have become
popular objects of both applied and theoretical interest, because they
are simple to define and generate and yet share some important
characteristics with many large and complex real-world networks. The
mathematical models help to understand how and on what conditions those
characteristics emerge, and this understanding can be valuable also in
the design of new, artificial networks.

The general characteristic of power-law random graphs is that their
degree distribution possesses a regularly varying tail, the most
interesting region of tail exponents being that of finite mean and
infinite variance. Several models with these features have been
studied. In this paper, we work with the conditionally Poissonian
random graph \cite{norrosreittu06}, a modification of the expected
degree sequence model of Chung and Lu \cite{chunglu03}. We draw an
i.i.d.\ sequence of mean degrees, `capacities', that follow a
Pareto distribution with finite mean and infinite variance.

Our graph of size $N$ has a giant component which is ultra-small in
the sense that the typical distance between vertices is proportional
to $\log\log N$ (more precisely, this is an upper bound, like most of
our distance results), whereas a similar random graph whose degrees
have the same mean but finite variance cannot offer better than $\log
N$ scalability of distances. Remarkably, the speciality of the
infinite variance case is the emergence of a `core network' consisting
of nodes with high degrees, through which the short paths typically
travel. It is now natural to ask, what happens to these distances, if
vertices with highest degrees are deleted. Our main findings can be
summarized as follows: 
\begin{enumerate}
\item Any deletion of core vertices leaves the asymptotical size of
the largest, giant component intact. 
\item If vertices with capacity higher than $N^\gamma$ are deleted,
$\gamma$ not depending on $N$, then paths that would have gone through
the deleted vertices can be repaired using `back-up paths'
that increase the distances only by a constant depending on $\gamma$
but not on $N$. This increase is negligible on the $\log\log N$
scaling of distances.
\item If the whole core is removed (for the exact meaning of this, see
Section \ref{modelsec}), the distances still scale slightly better
than $\log N$.
\end{enumerate}

Because of structural similarity (compare \cite{reittunorrosperfev04}
with \cite{norrosreittu06}), we conjecture that our results are
transferable to the configuration model of Newman {\em et al.}\
\cite{newman2001,reittunorrosperfev04}. Perhaps they can be extended
even to preferential attachment models in the finite mean and infinite
variance region, since van der Hofstad and Hooghiemstra showed
recently that these models share with the previously mentioned models
the loglog-scalability of distances \cite{hofstadhooghiemstra07}.

The next section specifies the graph model and reviews the relevant
results of \cite{norrosreittu06}. Some of them are slightly sharpened in
Section \ref{auxsec}. Section \ref{backupsec} applies results on the
diameter of classical random graphs to measure the core network in the
horizontal direction. The main result on robustness against core
losses is stated and proven in Section \ref{fullpathsec}.

\section{Model and earlier results}\label{modelsec}

Throughout the paper, we work with the following conditionally
Poissonian power-law random graph model \cite{norrosreittu06}. There
are $N$ vertices that possess, respectively, i.i.d.\ `capacities'
$\Lambda_1,\ldots,\Lambda_N$ with distribution
$$
\pr{\Lambda>x}=x^{-\tau+1},\quad x\in[1,\infty);\quad\tau\in(2,3).
$$
Given $\mathbf{\Lambda}=(\Lambda_1,\ldots,\Lambda_N)$, each pair
of vertices $\set{i,j}$, is connected with $E_{ij}$ edges, where
$$
E_{ij}\sim\mbox{Poisson}\left(\frac{\Lambda_i\Lambda_j}{L_N}\right),
\quad L_N=\sum_{j=1}^N\Lambda_j,
$$
and the $E_{ij}$'s are independent. Loops, i.e.\ the case $i=j$, are
included for principle, although they have no significance in the
present study. Given $\mathbf{\Lambda}$, the degree of vertex $i$ then
has the distribution $\mbox{Poisson}(\Lambda_i)$. The resulting random
graph is denoted as $G_N$.

The following coupling between neighborhood shells and branching
processes will play an important role, as it did in
\cite{norrosreittu06}. Fix $N$, and assume that the sequence
$\mathbf{\Lambda}$ has been generated and probabilities are now
conditioned on it. We shall consider a {\em marked branching process},
i.e., a branching process where each individual is associated with
some element of the mark space $\set{1,\ldots,N}$. More specifically,
let us define a process $(Z,\mathbf{J})=(Z_n,(J_{n,i}))$, where $Z_n$
is the size of generation $n$, and $J_{n,i}\in\set{1,\ldots,N}$ is the
mark of member $i$ of generation $n$. We set $Z_0\equiv1$ and take
$J_{0,1}$ from the uniform distribution $U\{1,\ldots,N\}$. The process
then proceeds so that an invidual bearing mark $i$ gives for each
$j=1,\ldots,N$ birth independently to a
Poisson($\Lambda_i\Lambda_j/L_N$) distributed number of $j$-marked
members of the next generation.

On the other hand, we consider the neighborhood sequence around a
random vertex of $G_N$. Take a vertex $i_0$ from uniform distribution,
and define recursively
\begin{eqnarray*}
\mathcal{N}_0(i_0)&=&\set{i_0}\\
\mathcal{N}_{k+1}(i_0)&=&
  \eset{j\in\left(\bigcup_{l=0}^k\mathcal{N}_l(i_0)\right)^c}{
  j\leftrightarrow\mathcal{N}_k(i_0)}.
\end{eqnarray*}
The following coupling was proven in \cite{norrosreittu06}.

\begin{proposition}\label{couplingprop1}
Let $(Z,\mathbf{J})$ be the marked branching process defined
above. Define a reduced process by proceeding generation by generation,
i.e., in the order
$$
J_{0,1};J_{1,1},J_{1,2},\ldots,J_{1,Z_1};J_{2,1},\ldots,
$$ and pruning (that is, deleting) from $(Z,\mathbf{J})$ each
individual, whose mark has already appeared before, together with all
its descendants (these are considered as not seen in the
procedure). Denote the resulting finite process by
$(\hat{Z},\hat{\mathbf{J}})$, and let $\hat{\mathcal{J}}_k$ be the set
of the marks in generation $k$ of the reduced process. Then, the
sequence of the sets $\hat{\mathcal{J}}_k$ has the same distribution
as the sequence $\mathcal{N}_k$.
\end{proposition}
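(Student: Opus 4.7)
The plan is to prove the statement by induction on $k$, showing that, conditionally on $\mathbf{\Lambda}$, the joint laws of $(\mathcal{N}_0,\ldots,\mathcal{N}_k)$ and of $(\hat{\mathcal{J}}_0,\ldots,\hat{\mathcal{J}}_k)$ coincide. All computations are carried out conditionally on $\mathbf{\Lambda}$. The base case $k=0$ is immediate, since both $\mathcal{N}_0$ and $\hat{\mathcal{J}}_0$ are singletons built from a single uniform draw from $\{1,\ldots,N\}$.

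For the inductive step I would compute, in each model separately, the conditional law of generation $k+1$ given the history up to generation $k$, and verify that the two laws agree. Write $A_k=\bigcup_{l\le k}\mathcal{N}_l$ and $\hat A_k=\bigcup_{l\le k}\hat{\mathcal{J}}_l$. In the graph, $\mathcal{N}_{k+1}$ consists of those $j\notin A_k$ with $\sum_{i\in\mathcal{N}_k}E_{ij}\ge 1$. The decisive observation is that the event that $(\mathcal{N}_0,\ldots,\mathcal{N}_k)$ takes any prescribed value is measurable with respect to $\{E_{ii'}:i,i'\in A_k\}$, so conditionally on this history the remaining edge variables $\{E_{ij}:i\in\mathcal{N}_k,\, j\notin A_k\}$ are still independent $\mbox{Poisson}(\Lambda_i\Lambda_j/L_N)$. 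The sum property of independent Poissons then gives, for each $j\notin A_k$,
\begin{equation*}
\mathbb{P}(j\in\mathcal{N}_{k+1}\mid\mathcal{N}_0,\ldots,\mathcal{N}_k)=1-\exp\!\left(-\frac{\Lambda_j\sum_{i\in\mathcal{N}_k}\Lambda_i}{L_N}\right),
\end{equation*}
and these events are conditionally independent across $j\notin A_k$.

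For the reduced branching process, the inductive hypothesis says that exactly one individual bearing each mark of $\hat{\mathcal{J}}_k$ survives in generation $k$. Each such individual marked $i$ independently produces $\mbox{Poisson}(\Lambda_i\Lambda_j/L_N)$ offspring marked $j$ for every $j$, independently of the past. Summing over these parents, the total number of $j$-marked children in generation $k+1$ of the unreduced process is $\mbox{Poisson}(\Lambda_j\sum_{i\in\hat{\mathcal{J}}_k}\Lambda_i/L_N)$, jointly independent across $j$. The pruning rule then declares $j\in\hat{\mathcal{J}}_{k+1}$ iff $j\notin\hat A_k$ and this total is at least one, yielding precisely the same Bernoulli-product law as on the graph side and closing the induction.

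The main obstacle, such as it is, lies in the bookkeeping for the reduced process: one must justify carefully that (a) deleting an individual together with all its descendants does not disturb the fact that the surviving first-time-mark individuals have genuine, fresh independent Poisson offspring counts, and (b) the order of processing within a generation is immaterial at the set level, since the pruning records only which new marks first appear, not which parent produced them. Both points reduce to the superposition property of independent Poissons, via which the $j$-marked children of the different parents in $\hat{\mathcal{J}}_k$ may be aggregated into a single Poisson count without loss of information for the purpose of determining $\hat{\mathcal{J}}_{k+1}$.
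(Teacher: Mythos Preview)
The paper does not actually prove Proposition~\ref{couplingprop1}; it is quoted from \cite{norrosreittu06}, so there is no argument here to compare yours against. Your inductive scheme is the natural one and, at the level of ideas, it is correct: compute the conditional law of the next shell given the past on both sides and match the resulting product of Bernoullis. The branching-process half is fine, including your remarks about why the pruning does not disturb the fresh Poisson offspring of the surviving individuals.

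There is, however, one concrete misstatement on the graph side that you should repair. You write that the event $\{(\mathcal{N}_0,\ldots,\mathcal{N}_k)=(S_0,\ldots,S_k)\}$ is measurable with respect to $\{E_{ii'}:i,i'\in A_k\}$. This is false: knowing $\mathcal{N}_1=S_1$ forces $E_{i_0 j}=0$ for \emph{every} $j\notin S_0\cup S_1$, in particular for $j\notin A_k$, so the event reveals information about edges leaving $A_k$. What is true, and what you actually need, is that the event is measurable with respect to $i_0$ together with the edges having at least one endpoint in $A_{k-1}=\bigcup_{l<k}S_l$. Since $\mathcal{N}_k\cap A_{k-1}=\emptyset$ and $j\notin A_k$, none of the unordered pairs $\{i,j\}$ with $i\in\mathcal{N}_k$, $j\notin A_k$ has an endpoint in $A_{k-1}$; hence these $E_{ij}$ are indeed conditionally independent Poisson as you claim. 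With this corrected justification, your proof goes through.
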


Moreover, we observed that the size of each generation and its marks
can be generated independently. Denote by $q_N$ the distribution
\begin{equation}
\label{qdistdef}
q_N(j)\doteq\frac{\Lambda_j}{L_N},\quad j=1,\ldots,N,
\end{equation}
and by $\pi^*$ the mixed Poisson distribution Poisson($\Gamma$),
where $\Gamma$ is a random variable with distribution
$$
\pr{\Gamma\in\D x}=x\pr{\Lambda\in\D x}/\E{\Lambda}.
$$ 
Obviously, the distribution of Poisson($\Lambda_{J^{(N)}}$),
$J^{(N)}\sim q_N$, converges weakly to $\pi^*$ in probability.

Let $J_1^{(N)},J_2^{(N)},\ldots$ be i.i.d.\ random variables with
distribution $q_N$, and let $J_0^{(N)}$ be a random variable with
uniform distribution on $\set{1,\ldots,N}$, independent of the
previous ones; we often suppress the superscript $\cdot^{(N)}$. Start now
by $(\tilde{Z}_0=1,J_0)$, and proceed generationwise by drawing the
size of the $n+1$'th generation from the distribution
$$
\tilde{Z}_{n+1}\sim\mathrm{Poisson}
\left(\sum_{i=M_{n-1}+1}^{M_n}\Lambda_{J_i}\right),
$$
where $M_{-1}=0$, $M_n=\sum_0^n\tilde{Z}_k$, and giving then each
individual a fresh mark from the sequence
$J_{M_n+1},J_{M_n+2},\ldots$.

\begin{proposition}\label{couplingprop2}
The marked generation sequence $(Z_n,(J_{n,i}))$ is stochastically
equivalent to 
$$
(\tilde{Z}_n,(J_{M_{n-1}+1},\ldots,J_{M_n})).
$$
\end{proposition}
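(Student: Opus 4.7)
The plan is to proceed by induction on the generation index $n$, using the Poisson superposition/coloring lemma as the sole nontrivial ingredient. The base case $n=0$ is immediate: both processes start with a single individual, $Z_0=\tilde Z_0=1$, carrying a uniformly random mark drawn from $\{1,\ldots,N\}$, and by construction no further randomness is involved at this stage.

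For the inductive step, I would condition on the full history up to and including generation $n$; by the inductive hypothesis, the multiset of marks in generation $n$ of the branching process can be identified with $(J_{M_{n-1}+1},\ldots,J_{M_n})$. In the original marked branching process, the number of mark-$j$ children produced by all $Z_n$ parents combined is a sum of independent Poisson variables, hence
\[
\mbox{Poisson}\!\left(\frac{\Lambda_j}{L_N}\,S_n\right),\qquad S_n:=\sum_{k=1}^{Z_n}\Lambda_{J_{n,k}},
\]
and these counts are independent across $j\in\{1,\ldots,N\}$. By the Poisson coloring lemma this joint law coincides with the following two-step recipe: first draw a total $\mbox{Poisson}(S_n)$ number of children, then assign to each of them an independent mark with distribution $q_N(j)=\Lambda_j/L_N$. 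This is exactly the rule that produces $\tilde Z_{n+1}$ together with the fresh marks $J_{M_n+1},\ldots,J_{M_{n+1}}$ in the alternative construction, since the inductive identification gives $S_n=\sum_{i=M_{n-1}+1}^{M_n}\Lambda_{J_i}$, and the $J_i$'s are i.i.d.\ $q_N$-distributed by definition.

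I expect no genuine obstacle here. The only piece of bookkeeping is the ordering of marks within a single generation: in the original process the children of different parents arise in some implicit order parent-by-parent, while in the alternative one they are consecutively labeled by $J_{M_n+1},J_{M_n+2},\ldots$. Because a generation is effectively an unordered multiset of marks (children of each parent are exchangeable, and the Poisson coloring lemma produces exchangeable colors across the whole generation), any consistent bijective relabeling identifies the two joint laws, which is all that ``stochastically equivalent'' demands.
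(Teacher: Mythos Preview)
Your argument is correct: the Poisson superposition/coloring lemma is exactly the right tool, and the induction on generations together with the exchangeability remark about within-generation ordering closes the proof cleanly.

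As for comparison: the paper does not actually prove this proposition. It is stated as a recall from \cite{norrosreittu06} (introduced with ``we observed that\ldots''), so there is no in-paper argument to compare against. Your write-up supplies precisely the kind of short proof one would expect for this observation, and nothing in the paper contradicts or refines it.
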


We next turn to the definition of the core network mentioned in the introduction.
Fix an increasing function $\ell:\Nat\to\Re$ such that
\begin{equation}
\label{elldef}
\ell(1)=1,\quad\frac{\ell(N)}{\log\log\log N}\to0,\quad
\frac{\ell(N)}{\log\log\log\log N}\to\infty
\quad\mathrm{as\ }N\to\infty,
\end{equation}
and define
$$
\epsilon(N):=\frac{\ell(N)}{\log N}.
$$
Note that
\begin{equation}
\label{Nepstoinfty}
\epsilon(N)\to0,\quad
N^{\epsilon(N)}=e^{\ell(N)}\to\infty\quad\mathrm{as\ }N\to\infty.
\end{equation}

Define recursively the functions
\begin{eqnarray}
\nonumber
\beta_0(N)&=&\frac1{\tau-1}+\frac{\epsilon(N)}{\tau-2},\\
\label{betakdef}
\beta_j(N)&=&(\tau-2)\beta_{j-1}(N)+\epsilon(N),\quad j=1,2,\ldots,
\end{eqnarray}
and denote
$$
k^*\doteq \left\lceil\frac{\log\log N}{-\log(\tau-2)}\right\rceil.
$$
For $N$ sufficiently large, we have $\beta_0(N)>\epsilon(N)/(3-\tau)$,
and the sequence $(\beta_k(N))_{k=0,1,\ldots}$ decreases toward
the limit value $\epsilon(N)/(3-\tau)$. With $k=k^*$, we are already
in the $\epsilon(N)$ order of magnitude:
\begin{equation}
\label{beta_k*-ub}
\beta_{k^*(N)}(N)\le\epsilon(N)\left(1+\sum_{i=0}^\infty(\tau-2)^i\right)
=\frac{4-\tau}{3-\tau}\,\epsilon(N).  
\end{equation}

The key of our analysis of $G_N$ is the notion of its {\em core} $C$,
consisting of all vertices with capacity higher than
$N^{\beta_{k^*}}$. Note that the exact boundary of the core depends on
$\ell(N)$ and is thus somewhat arbitrary. 

By the definition of $\epsilon(N)$, we can choose a natural
number $\kappa=\kappa(N)$ such that
\begin{equation}
\label{kappa-defcond}
\frac{\kappa(N)}{N^{\theta\epsilon(N)}}\to\infty,\quad
\frac{\kappa(N)}{k^*(N)}\to0,\quad\mbox{as }N\to\infty,
\end{equation}
where $\theta=(\tau-2)(4-\tau)/(3-\tau)$. We can now collect the main
results of \cite{norrosreittu06} to the following theorem:

\begin{theorem}\label{loglogthm}
\begin{enumerate}
\item\label{giantclaim} The graph $G_N$ has a.a.s.\ (asymptotically
almost surely) a giant component,
whose relative size approaches the value
\begin{equation}
\label{giantsize2}
1-\sum_{j=1}^\infty\pr{D=j}\pr{Z^{(\pi^*)}_\infty=0}^j,
\end{equation}
where $D$ is distributed as the conditionally Poissonian variable
Poisson($\Lambda$), and $(Z^{(\pi^*)}_n)$ is a Galton-Watson branching
process with offspring distribution $\pi^*$.
\item\label{corefoundclaim} A random vertex of the giant component is
connected with the core $C$ in less than $\kappa(N)$ hops. The
distribution of the capacity of the first core vertex found in a
random-order breadth-first neighborhood search is asymptotically
identical to the distribution of $J^{(N)}_{n_C}$, where
$n_C=\inf\eset{n}{J^{(N)}_n\in C}$.
\item\label{coreheightclaim} A random vertex of the core is a.a.s.\
connected with the vertex $i^*$ with highest capacity in at most $k^*$
hops.
\item\label{typdistanceclaim} As a consequence of the previous items,
the distance between two randomly chosen vertices of the giant
component is at most $2k^*(N)(1+o(1))$ a.a.s..
\end{enumerate}
\end{theorem}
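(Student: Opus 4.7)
The plan is to reduce each of the four claims to a statement about the marked branching process via Propositions~\ref{couplingprop1} and~\ref{couplingprop2}, and then to exploit the Pareto tail of $\Lambda$ together with the recursion~(\ref{betakdef}). Throughout I would condition on $\mathbf{\Lambda}$ and treat edge randomness separately.

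For claim~\ref{giantclaim}, I would first observe that a non-root individual in $(Z,\mathbf{J})$ produces a Poisson$(\Lambda_J)$ number of children with $J\sim q_N$, so its offspring law converges in probability to $\pi^*$. Because $\tau<3$ forces $\E{\Gamma}=\infty$, the corresponding Galton--Watson process is strictly supercritical with extinction probability $\pr{Z^{(\pi^*)}_\infty=0}<1$. Starting from a uniform root of conditional degree $D\sim$~Poisson$(\Lambda_{i_0})$, each neighbor founds an independent copy of this tree, so the extinction probability conditionally on $D$ is $\pr{Z^{(\pi^*)}_\infty=0}^D$; taking complements and averaging gives~(\ref{giantsize2}). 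Upgrading ``non-extinction'' to ``belongs to the giant component'' further needs the pruning in Proposition~\ref{couplingprop1} to be negligible up to shell sizes of order $o(\sqrt{N})$, together with a sprinkling/second-moment argument showing that two non-extinct explorations meet a.a.s.

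Claims~\ref{corefoundclaim} and~\ref{coreheightclaim} are both driven by doubly-exponential capacity growth. For~\ref{corefoundclaim}, the distributional statement is immediate from the i.i.d.\ structure of the marks $J_n$ in Proposition~\ref{couplingprop2} (up to pruning). For the hop bound, a size-biased draw from $q_N$ hits the core with probability at least of order $N^{-(\tau-2)\beta_{k^*}}\ge N^{-\theta\epsilon(N)}$ by~(\ref{beta_k*-ub}) and the Pareto tail, so among the marks explored within $\kappa(N)$ hops a core mark appears a.a.s.\ by the first condition in~(\ref{kappa-defcond}). For~\ref{coreheightclaim} I would induct downward in $k$: any vertex of capacity at least $N^{\beta_k}$ is a.a.s.\ adjacent to a vertex of capacity at least $N^{\beta_{k-1}}$, because the count of such neighbors is Poisson with mean at least $N^{\beta_k-(\tau-2)\beta_{k-1}}=N^{\epsilon(N)}\to\infty$ by~(\ref{betakdef}), yielding a failure probability bounded by $\exp(-N^{\epsilon(N)})$ per level. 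A union bound over the $k^*$ levels, combined with the extreme-value fact that $\Lambda_{i^*}$ is of order $N^{1/(\tau-1)}\ge N^{\beta_0}$, reaches $i^*$ in at most $k^*$ hops. Finally~\ref{typdistanceclaim} follows by concatenating two paths to the core with one path through $i^*$, giving at most $2\kappa(N)+2k^*(N)=2k^*(N)(1+o(1))$ by the second condition in~(\ref{kappa-defcond}).

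The main obstacle is controlling all these ``a.a.s.''\ events simultaneously over the $O(\log\log N)$ levels of the recursion: the approximation of neighborhood shells by the unpruned branching process must remain sharp throughout, which requires a careful bootstrap balancing capacity growth against mark-collision losses. This delicate step is the principal technical content already carried out in~\cite{norrosreittu06}.
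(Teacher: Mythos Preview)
Your sketch is consistent with the ideas the paper indicates, but you should be aware that the paper does \emph{not} contain a proof of Theorem~\ref{loglogthm} at all: it is introduced with the sentence ``We can now collect the main results of~\cite{norrosreittu06} to the following theorem,'' and is stated as a summary of results established in that earlier paper. The only proof-related comment the paper offers is the paragraph immediately following the theorem, which explains the idea behind claim~\ref{coreheightclaim}: divide the core into tiers $V_0,\ldots,V_{k^*}$ via the $\beta_k$, and show that a random vertex of tier $k$ a.a.s.\ has an edge into $V_{k-1}\cup\cdots\cup V_0$. Your inductive argument for~\ref{coreheightclaim} is exactly this, and your closing remark that the delicate bootstrap is ``already carried out in~\cite{norrosreittu06}'' is precisely the paper's own stance. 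So there is nothing to compare against beyond noting that your outline matches the one hint the paper gives and otherwise correctly points to the reference where the actual proofs live.
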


Denote $\mathcal{T}_{\gamma,\delta}
=\eset{i\in\set{1,\ldots,N}}{\Lambda_i\in(N^\gamma,N^\delta]}$. The
idea of the proof of claim \ref{coreheightclaim} of Theorem
\ref{loglogthm} is that if the core is divided into `tiers'
\begin{eqnarray*}
V_0&=&\set{i^*},\\
V_1&=&\mathcal{T}_{\beta_1,\infty}\setminus\set{i^*},\\
V_k&=&\mathcal{T}_{\beta_k,\beta_{k-1}},\quad k=2,\ldots,k^*,
\end{eqnarray*}
then, for any $k>0$, a random vertex of tier $k$ has a.a.s.\ an edge
to $V_{k-1}\cup\cdots\cup V_0$. 

\section{Auxiliary results}\label{auxsec}

In this section we sharpen the estimate used in \cite{norrosreittu06}
for the aggregated capacity of a set of form
$\eset{i}{\Lambda_i>N^\gamma}$. In particular, we obtain that the
aggregated capacity of a tier
$\mathcal{T}_{\gamma,\gamma+\epsilon(N)}$ is asymptotically
overwhelmingly larger than all strictly higher vertex capacities
together. This holds also when $\gamma$ depends on $N$. 

We use frequently the notation $x+I:=\eset{x+y}{y\in I}$, where
$x\in\Re$ and $I\subset\Re$ is an interval.

\begin{lemma}\label{uppersetvolumelemma}
Let $\alpha=\alpha(N)\in(0,\bar{\alpha})$, where 
$\bar{\alpha}<1/(\tau-1)$. Then, for $N$ sufficiently large,
$$
\pr{\frac{\sum_{i=1}^N\Lambda_i\indic{\Lambda_i>N^{\alpha}}}{
N^{1-(\tau-2)\alpha}\E{\Lambda}}\in(\frac14,4)}
\ge 1-\left(\frac{\E{\Lambda}}{2}\right)^{-\tau+1}
  N^{-(1-(\tau-1)\bar{\alpha})(\tau-2)}. 
$$
\end{lemma}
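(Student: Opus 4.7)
Since $\Var{\Lambda}=\infty$ when $\tau<3$, one cannot apply Chebyshev directly to $S:=\sum_{i=1}^N \Lambda_i\indic{\Lambda_i>N^\alpha}$. The plan is to truncate each summand from above at a level $M=M(N)$ proportional to the target expectation $\mu:=N^{1-(\tau-2)\alpha}\E{\Lambda}$: the truncated sum then has finite, tractable variance amenable to Chebyshev, while a union bound controls the probability that any single $\Lambda_i$ exceeds $M$. A direct computation with the density $(\tau-1)x^{-\tau}$ of $\Lambda$ gives the identity $\E{\Lambda\indic{\Lambda>x}}=\E{\Lambda}\,x^{-(\tau-2)}$ for $x\ge 1$, whence $\E{S}=\mu$.

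Set $M:=\mu/2$, $\tilde S:=\sum_{i=1}^N \Lambda_i\indic{N^\alpha<\Lambda_i\le M}$, and $A:=\set{\max_i\Lambda_i\le M}$. On $A$, $S=\tilde S$. By a union bound,
$$
\pr{A^c}\le NM^{-(\tau-1)}=(\E{\Lambda}/2)^{-(\tau-1)}N^{1-(1-\alpha(\tau-2))(\tau-1)},
$$
and a short rearrangement identifies the exponent of $N$ as $-(\tau-2)(1-\alpha(\tau-1))$; since $\alpha\le\bar\alpha<1/(\tau-1)$, it is at most $-(\tau-2)(1-(\tau-1)\bar\alpha)$, matching exactly both the rate and the constant $(\E{\Lambda}/2)^{-(\tau-1)}$ appearing in the lemma. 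For the truncated sum, $\Var{\tilde S}\le N\E{\Lambda^2\indic{\Lambda\le M}}\le \frac{\tau-1}{3-\tau}NM^{3-\tau}$, and $\E{\tilde S}=N\E{\Lambda}(N^{-\alpha(\tau-2)}-M^{-(\tau-2)})\in(3\mu/4,\mu)$ for $N$ sufficiently large (the subtracted term is of strictly lower order precisely because $\alpha(\tau-1)<1$). Chebyshev then bounds $\pr{|\tilde S-\E{\tilde S}|>\mu/4}$ by a constant multiple of $NM^{3-\tau}\mu^{-2}$, which after substituting $M=\mu/2$ reduces to the same exponent in $N$ as $\pr{A^c}$, and is therefore absorbed. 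Combining
$$
\pr{S\notin(\mu/4,4\mu)}\le \pr{A^c}+\pr{|\tilde S-\E{\tilde S}|>\mu/4}
$$
yields the claim.

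The main obstacle is the balancing of the two contributions: too small an $M$ makes $\Var{\Lambda\indic{\Lambda\le M}}$ small but its size polynomial in $M$ still blows the Chebyshev term up once divided by $\mu^2$, while too large an $M$ weakens the union bound. The sweet spot lies at $M$ of order $\mu$ itself, and this is precisely what forces the particular constant $(\E{\Lambda}/2)^{-(\tau-1)}$ to appear in the statement. The remaining work is essentially algebraic bookkeeping to confirm that both bounds have matching exponents in $N$ and that $\E{\tilde S}$ is close enough to $\mu$ for large $N$.
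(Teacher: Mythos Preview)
Your truncation-plus-Chebyshev argument is sound and yields the correct power of $N$, but the sentence ``reduces to the same exponent in $N$ as $\pr{A^c}$, and is therefore absorbed'' does not hold: having the same exponent is not the same as being dominated. With $M=\mu/2$ your Chebyshev bound is
\[
\frac{16\,\Var{\tilde S}}{\mu^2}\le 16\cdot\frac{\tau-1}{3-\tau}\cdot 2^{\tau-3}\,\E{\Lambda}^{1-\tau}\,N^{-(\tau-2)(1-(\tau-1)\alpha)},
\]
and the ratio of this constant to $(\E{\Lambda}/2)^{-(\tau-1)}=2^{\tau-1}\E{\Lambda}^{1-\tau}$ equals $4(\tau-1)/(3-\tau)$, which is always at least $4$ on $(2,3)$ and diverges as $\tau\uparrow3$. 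So what you actually prove is the inequality with $(\E{\Lambda}/2)^{-(\tau-1)}$ replaced by some explicit $C(\tau)$. That weaker statement suffices for every use of the lemma later in the paper (only the power of $N$ matters when one sums $O(\log\log N)$ such bounds), but it is not the lemma as stated.

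Your route is genuinely different from the paper's and more elementary. The paper exploits the Pareto scaling identity $\cPr{\Lambda>y}{\Lambda>x}=\pr{x\Lambda>y}$ to write
\[
\sum_{i=1}^N\Lambda_i\indic{\Lambda_i>N^\alpha}\ \buildrel\mathcal{D}\over=\ N^\alpha\sum_{k=1}^{N_\alpha}\tilde\Lambda_k,
\]
with $N_\alpha\sim\mathrm{Bin}(N,N^{-(\tau-1)\alpha})$ independent of the fresh i.i.d.\ copies $\tilde\Lambda_k$. It then controls $N_\alpha$ by binomial tail bounds and the normalised sum $\frac1n\sum\tilde\Lambda_k$ by the one-big-jump asymptotic $\pr{\sum_1^n\Lambda_i>2n\E{\Lambda}}\sim\E{\Lambda}^{-(\tau-1)}n^{-(\tau-2)}$ for subexponential variables (Cram\'er on the lower side). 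The polynomial error term there comes out as $\tfrac32\E{\Lambda}^{1-\tau}(\E{N_\alpha}/2)^{2-\tau}$, and after absorbing the exponentially small corrections by replacing $\tfrac32$ with $2$ one recovers exactly the constant $(\E{\Lambda}/2)^{-(\tau-1)}$. In short: the paper imports a sharper large-deviation input and gets the stated constant; your approach avoids subexponential theory entirely at the price of a looser (but still $\tau$-dependent only) constant.
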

\begin{proof}
Denote
$$
p(n)=\pr{\frac{\frac1n\sum_{i=1}^n\Lambda_i}{\E{\Lambda}}
  \not\in(\frac12,2)},\quad
  N_\alpha=\sum_1^n\indic{\Lambda_i>N^\alpha}.
$$ 
Since
$\cPr{\Lambda>y}{\Lambda>x}=\pr{x\Lambda>y}$, we have the distribution
equality
\begin{equation}
\label{upsetlemeq1}
\sum_{i=1}^N\Lambda_i\indic{\Lambda_i>N^{\alpha}}
\buildrel\mathcal{D}\over=N^{\alpha}\sum_{k=1}^{N_\alpha}\tilde{\Lambda}_k,
\end{equation}
where the $\tilde{\Lambda}_k$'s are fresh independent copies of
$\Lambda$. Then
\begin{eqnarray}
\label{upsetlemeq2}
\nonumber
&&\pr{\frac{\frac{1}{N_\alpha}\sum_{k=1}^{N_\alpha}\tilde{\Lambda}_k}{\E{\Lambda}}
  \not\in[\frac12,2]}
=\E{p(N_{\alpha})}\\
&\le&\pr{N_\alpha<\frac12\E{N_\alpha}}+\E{p(N_\alpha)
  \indic{N_\alpha\ge\frac12\E{N_\alpha}}}.
\end{eqnarray}
The distribution of $N_\alpha$ is $Bin(N,N^{-(\tau-1)\alpha})$. The
well-known bounds for a binomial random variable $X$,
\begin{eqnarray*}
\pr{X<\E{X}-x}&\le&\exp(-\frac{x^2}{2\E{X}}),\\
\pr{X>\E{X}+x}&\le&\exp(-\frac{x^2}{2\E{X}}+\frac{x^3}{\E{X}^3}),\\
\end{eqnarray*}
yield
\begin{equation}
\label{upsetlemeq3}
\pr{\frac{N_\alpha}{\E{N_\alpha}}<\frac12}\le e^{-\E{N_\alpha}/8},\quad
\pr{\frac{N_\alpha}{\E{N_\alpha}}>2}\le e^{-\E{N_\alpha}/8+1}.
\end{equation}
By the well-known result on subexponential variables with finite mean
(see, e.g., \cite{adlerfeldmantaqqu98}), 
$$
\pr{\frac1n\sum_{i=1}^n\Lambda_i>2\E{\Lambda}}
\sim\pr{\max(\Lambda_1,\ldots,\Lambda_n)>n\E{\Lambda}}
\sim\E{\Lambda}^{-\tau+1}n^{-\tau+2}.
$$
For the large deviation to the opposite direction, the classical
Cram\'er theorem applies and the probability goes to zero at
exponential speed. Thus, for $N$ sufficiently large,
\begin{equation}
\label{upsetlemeq4}
\E{p(N_{\alpha})}\le e^{-\E{N_\alpha}/8}
+\frac32\E{\Lambda}^{-\tau+1}\left(\frac{\E{N_\alpha}}{2}\right)^{-\tau+2}.
\end{equation}
Combining (\ref{upsetlemeq1})-(\ref{upsetlemeq4}), replacing $\alpha$
by the worse case $\bar{\alpha}$ and upperbounding
the exponential terms by replacing the factor $3/2$ in
(\ref{upsetlemeq4}) by 2 we get the assertion.
\end{proof}

\begin{lemma}\label{tiervolumelemma}
Let $\alpha_0(N),\alpha_1(N)\in(0,\bar{\alpha})$, where
$\bar{\alpha}<1/(\tau-1)$, and
$\alpha_0(N)\le\alpha_1(N)-\epsilon(N)$. Then, for $N$
sufficiently large,
$$
\pr{\frac{\sum_{i=1}^N\Lambda_i\indic{\Lambda_i\in(N^{\alpha_0},N^{\alpha_1}]}}{
N^{1-(\tau-2)\alpha_0}\E{\Lambda}}\in(\frac15,5)}
\ge 1-2\left(\frac{\E{\Lambda}}{2}\right)^{-\tau+1}
  N^{-(1-(\tau-1)\bar{\alpha})(\tau-2)}.
$$
\end{lemma}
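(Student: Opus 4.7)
The natural plan is to write the tier sum as a difference of two upper-tail sums and invoke Lemma \ref{uppersetvolumelemma} for each:
$$
\sum_{i=1}^N\Lambda_i\indic{\Lambda_i\in(N^{\alpha_0},N^{\alpha_1}]}
=\sum_{i=1}^N\Lambda_i\indic{\Lambda_i>N^{\alpha_0}}
-\sum_{i=1}^N\Lambda_i\indic{\Lambda_i>N^{\alpha_1}}.
$$
Lemma \ref{uppersetvolumelemma} applied to $\alpha_0$ says that, outside an event of the stated small probability, the first sum lies in $(A/4,4A)$, where $A:=N^{1-(\tau-2)\alpha_0}\E{\Lambda}$. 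Applied to $\alpha_1$, it says that outside a similarly small event, the second sum lies in $(B/4,4B)$, where $B:=N^{1-(\tau-2)\alpha_1}\E{\Lambda}$. A union bound then gives the factor of $2$ in the failure probability.

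The key observation is that the hypothesis $\alpha_1\ge\alpha_0+\epsilon(N)$ forces the second term to be negligible compared to the first: indeed
$$
\frac{B}{A}=N^{-(\tau-2)(\alpha_1-\alpha_0)}\le N^{-(\tau-2)\epsilon(N)}
=e^{-(\tau-2)\ell(N)}\to0
$$
by (\ref{Nepstoinfty}). Hence for $N$ large enough we have $4B<A/20$, so
$$
\frac{A}{4}-4B>\frac{A}{4}-\frac{A}{20}=\frac{A}{5},
\qquad
4A-0\le 4A<5A,
$$
which places the tier sum inside $(A/5,5A)$ on the intersection of the two good events.

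The only subtlety is uniformity in $N$: both $\alpha_0$ and $\alpha_1$ may depend on $N$, but they are bounded above by $\bar\alpha$, so the probability bound in Lemma \ref{uppersetvolumelemma} holds uniformly in this range, and the negligibility $B/A\to0$ depends only on $\alpha_1-\alpha_0\ge\epsilon(N)$, not on the individual values. The main (and essentially only) non-bookkeeping step is the use of the gap condition $\alpha_1-\alpha_0\ge\epsilon(N)$ together with (\ref{Nepstoinfty}) to absorb the error from subtracting the higher tail into the shift from $(1/4,4)$ to $(1/5,5)$; everything else is arithmetic and a union bound.
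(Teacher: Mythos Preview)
Your proof is correct and is exactly the approach the paper takes: apply Lemma \ref{uppersetvolumelemma} to both $\alpha_0$ and $\alpha_1$, use a union bound for the factor $2$, and observe that $N^{-(\tau-2)(\alpha_1-\alpha_0)}\to0$ so that the subtracted tail is absorbed into the widened interval $(\tfrac15,5)$. The paper states this in one line; you have simply spelled out the arithmetic.
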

\begin{proof}
This follows by applying Lemma \ref{uppersetvolumelemma} to both
$\alpha_0$ and $\alpha_1$ and noting that $N^{-(\tau-2)(\alpha_1-\alpha_0)}\to0$.
\end{proof}

\begin{lemma}\label{lowtierbiglemma}
For any fixed $b>1$,
$$
\frac{\sum_{i=1}^N\Lambda_i\indic{\Lambda_i>N^{b\epsilon(N)}}}{
\sum_{i=1}^N\Lambda_i\indic{\Lambda_i>N^{\epsilon(N)}}}\to0
\quad\mbox{in probability.}
$$
\end{lemma}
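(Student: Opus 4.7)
The claim should follow almost immediately from Lemma \ref{uppersetvolumelemma} applied separately with $\alpha=\epsilon(N)$ (for the denominator) and $\alpha=b\epsilon(N)$ (for the numerator). First I fix any constant $\bar{\alpha}\in(0,1/(\tau-1))$. Since $\epsilon(N)\to0$ by (\ref{Nepstoinfty}) and $b$ is fixed, for all sufficiently large $N$ we have $b\epsilon(N)<\bar{\alpha}$, so Lemma \ref{uppersetvolumelemma} is applicable with either choice of $\alpha$, and the probability bound it delivers tends to $1$ because the exponent $(1-(\tau-1)\bar{\alpha})(\tau-2)$ is strictly positive.

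On the intersection of the two resulting good events, which has probability at least
$$1-2\left(\frac{\E{\Lambda}}{2}\right)^{-\tau+1}N^{-(1-(\tau-1)\bar{\alpha})(\tau-2)},$$
the denominator is bounded below by $\tfrac14\,N^{1-(\tau-2)\epsilon(N)}\E{\Lambda}$ and the numerator is bounded above by $4\,N^{1-(\tau-2)b\epsilon(N)}\E{\Lambda}$. Dividing, the ratio is at most
$$16\,N^{-(\tau-2)(b-1)\epsilon(N)}=16\,e^{-(\tau-2)(b-1)\ell(N)}.$$
Since $\tau-2>0$, $b-1>0$, and $\ell(N)\to\infty$ (the second condition in (\ref{elldef}) forces $\ell(N)\to\infty$ because $\log\log\log\log N\to\infty$), this deterministic upper bound tends to $0$, and convergence in probability follows.

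There is no genuinely hard step: the only things to check are that both $\epsilon(N)$ and $b\epsilon(N)$ eventually lie below $\bar{\alpha}$ (immediate from $\epsilon(N)\to0$) and that $(\tau-2)(b-1)\ell(N)\to\infty$. Morally, this lemma expresses exactly the reason why $\ell(N)$ is required to diverge in (\ref{elldef}): the aggregated capacity of any tier of width $\epsilon(N)$ dominates, by a factor $N^{\Theta(\epsilon(N))}=e^{\Theta(\ell(N))}$, the aggregated capacity of everything strictly above it, which in turn is what will make the tier-by-tier argument behind Theorem \ref{loglogthm} robust under removal of higher tiers.
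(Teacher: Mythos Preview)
Your proof is correct and follows essentially the same route as the paper: apply Lemma~\ref{uppersetvolumelemma} once with $\alpha=\epsilon(N)$ and once with $\alpha=b\epsilon(N)$, take the ratio to get a bound of order $N^{-(\tau-2)(b-1)\epsilon(N)}$, and conclude using $N^{\epsilon(N)}=e^{\ell(N)}\to\infty$. The paper's version is terser (it simply states that the ratio times $N^{(b-1)(\tau-2)\epsilon(N)}$ lies in $[1/16,16]$ a.a.s.), but the argument is identical.
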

\begin{proof}
By Lemma \ref{uppersetvolumelemma}, 
$$
\frac{\sum_{i=1}^N\Lambda_i\indic{\Lambda_i>N^{b\epsilon(N)}}}{
\sum_{i=1}^N\Lambda_i\indic{\Lambda_i>N^{\epsilon(N)}}}
\cdot N^{(b-1)(\tau-2)\epsilon(N)}
\in[\frac{1}{16},16]\quad\mbox{a.a.s.,}
$$
and the claim follows since $N^{\epsilon(N)}\to\infty$.
\end{proof}

Lemma \ref{lowtierbiglemma}, combined with our earlier results in
\cite{norrosreittu06}, entails the important observation that the
first contact to the core is a.a.s.\ close to its bottom:

\begin{proposition}\label{findcoreprop}
Fix $b>0$. Let $I_N$ be a random node of the giant component of
$G_N$. Asymptotically almost surely, $I_N$ is connected to
$\mathcal{T}_{\beta_{k^*},b\beta_{k^*}}$ with a path that avoids the
set $\eset{i}{\Lambda_i>N^{b\beta_{k^*}}}$, and whose length coincides
with the distance between $I_N$ and the core.
\end{proposition}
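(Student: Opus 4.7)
The plan is to combine Theorem \ref{loglogthm}(\ref{corefoundclaim}) with a sharp application of Lemma \ref{uppersetvolumelemma} at the two thresholds $N^{\beta_{k^*}}$ and $N^{b\beta_{k^*}}$. By the cited result, asymptotically almost surely a random-order breadth-first search from $I_N$ reaches the core in at most $\kappa(N)$ hops, and the capacity of the \emph{first} core vertex encountered has the same limiting law as $\Lambda_{J^{(N)}_{n_C}}$, where $J^{(N)}_1,J^{(N)}_2,\ldots$ are i.i.d.\ with law $q_N$ and $n_C=\inf\eset{n}{\Lambda_{J^{(N)}_n}>N^{\beta_{k^*}}}$. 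It therefore suffices to prove that this capacity is at most $N^{b\beta_{k^*}}$ with probability tending to one; for then the BFS path witnessing the distance from $I_N$ to the core ends in $\mathcal{T}_{\beta_{k^*},b\beta_{k^*}}$, while all of its earlier vertices lie outside $C$ and so have capacity at most $N^{\beta_{k^*}}<N^{b\beta_{k^*}}$ (the substantive case being $b>1$). Hence the entire path will avoid $\eset{i}{\Lambda_i>N^{b\beta_{k^*}}}$, and the equality of its length with the distance to the core is automatic from the BFS construction.

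Conditionally on $\mathbf{\Lambda}$, the probability I need to control equals the $q_N$-mass ratio
\[
\frac{\sum_{i=1}^N\Lambda_i\indic{\Lambda_i>N^{b\beta_{k^*}}}}
     {\sum_{i=1}^N\Lambda_i\indic{\Lambda_i>N^{\beta_{k^*}}}}.
\]
Because $\beta_{k^*}(N)\to0$, both exponents eventually lie in $(0,\bar{\alpha})$ for some $\bar{\alpha}<1/(\tau-1)$, so Lemma \ref{uppersetvolumelemma} will apply to the numerator with $\alpha=b\beta_{k^*}$ and to the denominator with $\alpha=\beta_{k^*}$. A.a.s.\ the ratio is then bounded above by a constant multiple of $N^{-(\tau-2)(b-1)\beta_{k^*}}$, which tends to zero because $\beta_{k^*}(N)\ge\epsilon(N)/(3-\tau)$ (the limit of the recursion (\ref{betakdef})) and $N^{\epsilon(N)}\to\infty$ by (\ref{Nepstoinfty}). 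The same bound then transfers unconditionally in probability, yielding the desired tier-localisation.

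The one genuinely delicate point is the legitimacy of the coupling between the BFS and the i.i.d.\ $q_N$-sample sequence up to the first core hit: the pruning in Proposition \ref{couplingprop1} must be shown to cost nothing asymptotically. This is already controlled in \cite{norrosreittu06} by the two conditions in (\ref{kappa-defcond}), which keep the cumulative offspring through generation $\kappa(N)$ well below the scale at which mark collisions become non-negligible. Granted this, the estimates above are routine; the only arithmetic subtlety will be to choose $\bar{\alpha}$ so that $b\beta_{k^*}(N)<\bar{\alpha}<1/(\tau-1)$ for all large $N$, which is possible since $b\beta_{k^*}(N)\to0$.
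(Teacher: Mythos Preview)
Your proof is correct and follows essentially the same approach as the paper: both reduce the claim to showing that the $q_N$-mass ratio
$\sum_i\Lambda_i\indic{\Lambda_i>N^{b\beta_{k^*}}}\big/\sum_i\Lambda_i\indic{\Lambda_i>N^{\beta_{k^*}}}$
tends to zero, using that $\beta_{k^*}$ is of order $\epsilon(N)$ and $N^{\epsilon(N)}\to\infty$. The only cosmetic difference is that the paper packages this ratio estimate as a separate lemma (Lemma~\ref{lowtierbiglemma}, with $\beta_{k^*}$ in the role of $\epsilon(N)$), whereas you derive it inline from Lemma~\ref{uppersetvolumelemma}.
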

\begin{proof}
Since the proportional size of the core shrinks to zero, it suffices
to consider the case that $I_N$ is picked from outside the core. By
the results in \cite{norrosreittu06}, conditional that $I_N$ is
connected with the core, one of the minimal length paths to the core
has its core endpoint distributed as the first core element in an
i.i.d.\ sequence $J_n\in\set{1,\ldots,N}$ with common distribution
(\ref{qdistdef}). Now, $\beta_{k^*}$ is proportional to $\epsilon(N)$,
and Lemma \ref{lowtierbiglemma} can be applied with $\beta_{k^*}$ in
the role of $\epsilon(N)$. We conclude that the first core element in
the sequence $J_n$ belongs a.a.s.\ to the set
$\mathcal{T}_{\beta_{k^*},b\beta_{k^*}}$, and the statement of the
proposition follows.
\end{proof}

\section{Horizontal paths in the core}\label{backupsec}

In this section, we show that quite thin tiers of the form
$\mathcal{T}_{\gamma',\gamma''}$ are internally (as induced subgraphs)
almost connected in the sense that the proportional size of the
largest component approaches one and, moreover, the diameter of that
largest component is obtained with high accuracy from remarkable
results on classical random graphs. For deterministic $\gamma$, the
tiers $\mathcal{T}_{\gamma-\epsilon(N),\gamma}$ are a.a.s.\ connected,
and their diameters can be picked almost unequivocally from the
following classical theorem \cite{bollobas2001}.

\begin{theorem}
\label{bollobasthm}
Consider the Erd\"os-R\'enyi random graph $G_{n,p}$ and let $p=p(n)$ and
$d=d(n)>2$ satisfy
\begin{eqnarray*}
 \frac{\log n}{d}-3\log\log n&\to&\infty\\
 p^dn^{d-1}-2\log n&\to&\infty\\
 p^{d-1}n^{d-2}-2\log n&\to&-\infty
\end{eqnarray*}
Then $G_{n,p}$ has diameter $d$ a.a.s.
\end{theorem}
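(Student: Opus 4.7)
My plan is to reproduce Bollob\'as's proof by treating the upper and lower bounds on the diameter separately, via first- and second-moment estimates on the number of short paths between a fixed pair of vertices. For an ordered pair $(u,v)$ in $G_{n,p}$, let $X_{u,v}^{(k)}$ denote the number of $u$--$v$ paths of length exactly $k$, and write $Y_{u,v}=\sum_{k\leq d-1}X_{u,v}^{(k)}$.

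For the upper bound (diameter $\leq d$), I would first compute $\E{X_{u,v}^{(d)}}=(n-2)(n-3)\cdots(n-d)\,p^d\sim n^{d-1}p^d$, which by hypothesis (ii) exceeds $2\log n$ by a diverging margin. A Janson-type or direct second-moment argument, carefully estimating how pairs of length-$d$ paths that share internal vertices contribute to $\E{(X_{u,v}^{(d)})^2}$, should then yield $\pr{X_{u,v}^{(d)}=0}\leq\exp(-\E{X_{u,v}^{(d)}}(1+o(1)))=o(n^{-2})$. A union bound over the $\binom{n}{2}$ vertex pairs then forces every pair to be at distance at most $d$ a.a.s.

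For the lower bound (diameter $\geq d$), I would count pairs at distance strictly greater than $d-1$. The main contribution to $\E{Y_{u,v}}$ comes from length-$(d-1)$ paths, so $\E{Y_{u,v}}\sim n^{d-2}p^{d-1}$, which by hypothesis (iii) falls short of $2\log n$ by a divergent margin. A Chen--Stein or Suen Poisson approximation then yields $\pr{Y_{u,v}=0}\geq\exp(-\E{Y_{u,v}}(1+o(1)))$, so the expected number of pairs at distance $>d-1$ grows without bound. A second-moment calculation on this pair count (controlling correlations between the indicators $\indic{Y_{u,v}=0}$ across pairs that share a vertex) shows that such a pair exists a.a.s., producing the matching lower bound.

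The hard part in both directions is the variance control: I must show that the contribution of pairs of paths sharing internal vertices or edges is negligible relative to the squared expectation, with enough accuracy to resolve the constant $2$ in front of $\log n$. This is precisely the role of hypothesis (i): forcing $d=o(\log n/\log\log n)$ keeps the combinatorial enumeration of overlapping path-pairs (with factors like $d^s$ for $s$ shared vertices) well below $n^\eta$ for any fixed $\eta>0$, so the correction terms stay subdominant and the Poisson approximation used in the lower bound stays sharp. Getting this error analysis tight enough to match the $2\log n$ threshold on both sides is where I expect the main technical effort to lie.
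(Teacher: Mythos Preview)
The paper does not prove this theorem at all: it is quoted as a classical result from Bollob\'as's monograph \cite{bollobas2001} (and the companion Theorem~\ref{chungluthm} is likewise quoted from Chung and Lu) and then applied as a black box inside the proof of Proposition~\ref{thintierdiamprop}. So there is no ``paper's own proof'' to compare your attempt against.

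That said, your outline is indeed the standard route to this result: first- and second-moment analysis of the number of $u$--$v$ paths of a given length, with the constant $2$ in front of $\log n$ arising from the union bound over $\binom{n}{2}$ pairs, and condition~(i) used to keep the overlap terms in the variance under control. If you want to write it up in full you should consult Bollob\'as's original argument directly; for the purposes of the present paper, however, a proof is neither provided nor required.
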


To include also lower parts of the core, we apply the following,
rather recent result by Chung and Lu \cite{chunglu01}.

\begin{theorem}
\label{chungluthm}
If $np\to\infty$ and $(\log n)/(\log np)\to\infty$, then almost surely
$$
\mathrm{diam}(G_{n,p})=(1+o(1))\frac{\log n}{\log np},
$$
where $\mathrm{diam}(G)$ denotes the diameter of the largest (giant) component
of $G$. 
\end{theorem}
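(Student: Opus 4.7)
The plan is to prove matching upper and lower bounds of the form $(1+o(1))\log n/\log(np)$ on $\mathrm{diam}(G_{n,p})$. Write $\lambda = np$, so the hypotheses give $\lambda\to\infty$ with $\log\lambda = o(\log n)$, and fix $\epsilon > 0$.

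\textbf{Lower bound.} A first-moment computation suffices. Writing $B_u(k)$ for the ball of radius $k$ around $u$, one has $\mathbb{E}|B_u(k)| \le \sum_{j=0}^{k}\lambda^{j} \le 2\lambda^{k}$. Taking $d^- = \lfloor(1-\epsilon)\log n/\log\lambda\rfloor$ yields $\mathbb{E}|B_u(d^-)| \le 2n^{1-\epsilon}$, so Markov's inequality and a union bound over $u$ produce, a.a.s., a vertex whose ball of radius $d^-$ has size $o(n)$; this vertex lies at distance exceeding $d^-$ from most vertices of the giant component, forcing $\mathrm{diam}(G_{n,p}) > d^-$ a.a.s.

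\textbf{Upper bound.} I would use an edge-splitting / sprinkling argument. Choose $\delta = \delta(n)\to 0$ slowly and couple $G_{n,p}$ as $G_1\cup G_2$ with $G_1\sim G(n,p_1)$ and $G_2\sim G(n,p_2)$ independent, where $p_1 = (1-\delta)p$ and $p_2$ is determined by $(1-p_1)(1-p_2) = 1-p$, so $p_2 \sim \delta p$. In $G_1$ I would explore the BFS neighbourhood $B^{(1)}_u(k)$ of $u$ generation by generation: conditional on the explored set being $o(n)$, the $(k{+}1)$-th frontier is Binomial with mean close to $\lambda_1 := (1-\delta)\lambda$ times the previous frontier size, and Chernoff bounds (effective because $\lambda_1\to\infty$) give $|B^{(1)}_u(k)|\ge(\lambda_1/2)^k$ as long as this bound stays below $s := \sqrt{3n\log n/(\delta\lambda)}$. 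Choosing $r = \lceil(1+\epsilon)\log n/(2\log\lambda)\rceil$ yields $|B^{(1)}_u(r)|\ge s$ a.a.s.\ uniformly over $u$ in the giant component of $G_1$. Since the balls $B^{(1)}_u(r),\,B^{(1)}_v(r)$ are $G_1$-measurable and hence independent of $G_2$, the conditional probability that no $G_2$-edge joins them is at most $(1-p_2)^{s^2}\le e^{-p_2 s^2}\le n^{-3}$. A union bound over the $O(n^2)$ vertex pairs (absorbing the trivial case $v\in B^{(1)}_u(r)$) gives $\mathrm{diam}(G_{n,p})\le 2r+1\le(1+2\epsilon)\log n/\log\lambda$ a.a.s.

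\textbf{Main obstacle.} The delicate step is the BFS concentration across $\Theta(\log n/\log\lambda)$ generations in the sparse regime where $\lambda$ may grow arbitrarily slowly (e.g.\ $\lambda = \log\log n$). A single-generation Chernoff estimate only gives failure probability $e^{-\Omega(\lambda)}$, which cannot simply be summed over generations and vertices when $\lambda = o(\log n)$. The standard remedy is a two-sided coupling with Galton--Watson trees of means $\lambda(1\pm o(1))$, combined with classical multiplicative-concentration results for the generation sizes of supercritical branching processes, and the fact that the explored set stays $o(n)$ throughout the first $r$ generations so that the Binomial success probabilities are well approximated by $p$. The hypothesis $\log n/\log\lambda\to\infty$ is exactly what makes the accumulated errors negligible on the final $\log n/\log\lambda$ scale of the diameter.
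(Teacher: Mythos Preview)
The paper does not prove this theorem: it is quoted verbatim as a result of Chung and Lu \cite{chunglu01} and used as a black box in the proofs of Propositions \ref{thintierdiamprop} and \ref{coretierdiamprop} and of Theorem \ref{robustnessthm}. There is therefore no ``paper's own proof'' to compare your attempt against.

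As for the sketch itself, the overall architecture --- first-moment lower bound, sprinkling upper bound --- is the standard one and is in the spirit of Chung and Lu's original argument. Two small points. In the lower bound you do not need (and cannot use) a union bound over $u$: Markov applied to a \emph{single} vertex $u$ already gives $|B_u(d^-)|=o(n)$ a.a.s., and since $\lambda\to\infty$ forces the giant to contain $(1-o(1))n$ vertices, that one $u$ is a.a.s.\ in the giant and misses some other giant vertex at distance $>d^-$. In the upper bound, you have correctly isolated the genuine difficulty: a per-generation Chernoff bound with error $e^{-\Omega(\lambda)}$ cannot survive a union over $n$ vertices and $\Theta(\log n/\log\lambda)$ generations when $\lambda$ may be as small as, say, $\log\log n$. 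Your one-line ``standard remedy'' hides essentially all the work of the proof; a complete argument needs either the careful branching-process coupling you allude to, or the original Chung--Lu neighbourhood-growth lemmas, and should be written out rather than asserted.
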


Denote
\begin{equation}
\label{w(gamma)def}
w(\gamma):=\left\lceil\frac{1-(\tau-1)\gamma}{(3-\tau)\gamma}
\right\rceil.
\end{equation}

\begin{proposition}\label{thintierdiamprop}
Let $\gamma(N)$ be a non-increasing function such that
$\gamma(N)\in[\epsilon(N),\frac12)$ for all $N$. If $\lim\gamma(N)>0$,
assume also that $\gamma(\infty)$ does not correspond to a jump
of the ceiling function in (\ref{w(gamma)def}). Then, there exists another
function $\gamma_*(N)$ such that 
$$
\gamma_*(N)<\gamma(N),\quad
\gamma_*(N)/\gamma(N)\to1,\quad
|\mathcal{T}_{\gamma_*(N),\gamma(N)}|\to\infty,
$$
and
\begin{equation}
\label{tierdiam}
\mathrm{diam}(\mathcal{T}_{\gamma_*,\gamma})
=(1+o(1))w(\gamma(N))\quad\mbox{a.a.s.},
\end{equation}
where $\mathrm{diam}(\cdot)$ is generally interpreted as in Theorem
\ref{chungluthm}. When $\gamma(N)$ is bounded away from zero,
$\mathcal{T}_{\gamma_*,\gamma}$ is a.a.s.\ connected and its diameter is
a.a.s.\ equal to $w(\gamma(N))$. 
\end{proposition}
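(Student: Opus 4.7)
My plan is to reduce the statement to classical Erd\H{o}s--R\'enyi asymptotics via a stochastic sandwich. The underlying observation is that in a narrow tier all vertices have capacities of essentially the same order $N^{\gamma}$, so the induced subgraph is close to a $G_{n,p}$ whose parameters are determined by that common scale.

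First I would construct $\gamma_*(N)$ by setting $\gamma(N)-\gamma_*(N)=\sqrt{\ell(N)}/\log N$. Since $\gamma\ge\epsilon(N)=\ell(N)/\log N$, this choice satisfies $\gamma_*/\gamma\to 1$ uniformly on the admissible range, while $(\tau-1)(\gamma-\gamma_*)\log N=(\tau-1)\sqrt{\ell(N)}\to\infty$, so by standard binomial concentration the tier cardinality concentrates around its mean $N(N^{-(\tau-1)\gamma_*}-N^{-(\tau-1)\gamma})\to\infty$. On the a.a.s.\ event that $L_N=(1+o(1))N\E{\Lambda}$, the conditional edge probability between two vertices of $\mathcal{T}_{\gamma_*,\gamma}$, namely $1-\exp(-\Lambda_i\Lambda_j/L_N)$, lies between $p_{\min}\sim N^{2\gamma_*-1}/\E{\Lambda}$ and $p_{\max}\sim N^{2\gamma-1}/\E{\Lambda}$. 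Hence the induced subgraph stochastically dominates $G_{n,p_{\min}}$ and is dominated by $G_{n,p_{\max}}$, with $n=|\mathcal{T}_{\gamma_*,\gamma}|$, yielding matching upper and lower bounds on its diameter.

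For both bounding graphs one has $np\sim N^{(3-\tau)\gamma}/\E{\Lambda}\to\infty$ (thanks to $\gamma\ge\epsilon(N)$ and (\ref{Nepstoinfty})), and the discrepancy between $\log(np_{\min})$ and $\log(np_{\max})$ is $2(\gamma-\gamma_*)\log N=o(\log(np))$ by our choice of $\gamma_*$. Consequently $\log n/\log(np)\to(1-(\tau-1)\gamma)/((3-\tau)\gamma)$ for both bounds, so Theorem \ref{chungluthm} gives $\mathrm{diam}(\mathcal{T}_{\gamma_*,\gamma})=(1+o(1))w(\gamma(N))$ after taking the ceiling, which is permissible under the non-jump assumption. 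When $\gamma$ is bounded away from zero, $w(\gamma(N))$ is eventually a fixed integer $d$, and I would invoke Theorem \ref{bollobasthm} instead: its three hypotheses reduce to checking that the exponent $s\gamma(3-\tau)-(1-(\tau-1)\gamma)$ is positive at $s=d$ and negative at $s=d-1$, which again follows from the non-jump assumption. Connectedness in this regime follows from $np/\log n\to\infty$.

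The main obstacle is the joint calibration of $\gamma-\gamma_*$: three competing requirements must be met simultaneously --- the tier cardinality must diverge, the gap between $p_{\min}$ and $p_{\max}$ must vanish on the logarithmic scale (so the Chung--Lu asymptotics of the two sandwiching graphs coincide), and $\gamma_*$ must remain in the admissible region. A secondary delicacy is that the ceiling in $w(\gamma)$ must be stable under the $o(1)$ perturbations introduced by the Chung--Lu and sandwich steps, which is precisely why the non-jump assumption appears, and under which the two-sided hypothesis of Bollob\'as's theorem can also be secured for $d=w(\gamma)$ in the bounded regime.
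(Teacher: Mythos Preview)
Your approach is essentially the paper's: sandwich the induced subgraph $\mathcal{T}_{\gamma_*,\gamma}$ between two Erd\H{o}s--R\'enyi graphs $G_{n,p_{\min}}$ and $G_{n,p_{\max}}$ and then apply Theorem~\ref{bollobasthm} in the regime where $\gamma$ is bounded away from zero and Theorem~\ref{chungluthm} when $\gamma(N)\to0$. The one genuine difference is your calibration of the gap. The paper sets $\gamma-\gamma_*=\delta\epsilon(N)$ with a \emph{fixed} $\delta\in(0,1)$; this suffices when $\gamma$ stays bounded away from zero, but in the case $\gamma(N)\to0$ (in particular when $\gamma\sim C\epsilon(N)$) it does not give $\gamma_*/\gamma\to1$, so the paper has to run a diagonal argument, letting $\delta=\delta_k\searrow0$ along a carefully chosen sequence $N_k$. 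Your single choice $\gamma-\gamma_*=\sqrt{\ell(N)}/\log N$ avoids this: it is small enough that $(\gamma-\gamma_*)/\gamma\le1/\sqrt{\ell(N)}\to0$ uniformly over the admissible range, yet large enough that $(\tau-1)(\gamma-\gamma_*)\log N=(\tau-1)\sqrt{\ell(N)}\to\infty$ (so the tier is non-degenerate) and $2(\gamma-\gamma_*)\log N=2\sqrt{\ell(N)}=o((3-\tau)\gamma\log N)$ (so the upper and lower sandwich bounds match on the logarithmic scale). This is a neat simplification. One remark: since your gap is strictly smaller than $\epsilon(N)$, Lemma~\ref{tiervolumelemma} as stated does not cover it, and your direct appeal to binomial concentration for $|\mathcal{T}_{\gamma_*,\gamma}|$ is indeed the correct substitute.
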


\begin{proof}
We shall suppress the $N$'s in $\gamma(N)$ etc.\ when it improves
the clarity of presentation. 

Considered as an induced subgraph, $\mathcal{T}_{\gamma_*,\gamma}$ is
denser (resp.\ sparser) than the classical random graph $G_{n,p_*}$
(resp.\ $G_{n,p^*}$) with
$$
n=n(N)=|\mathcal{T}_{\gamma_*,\gamma}|,\quad
p_*=p_*(N)=\frac{N^{2\gamma_*}}{L_N},\quad
p^*=p^*(N)=\frac{N^{2\gamma}}{L_N}.
$$
Denote $\gamma_\delta(N)=\gamma-\delta\epsilon(N)$, where
$\delta\in(0,1)$. We fix $\delta$ for a while and choose
$\gamma_*=\gamma_\delta$. By Lemma \ref{tiervolumelemma},
$|\mathcal{T}_{\gamma_*,\gamma}|\to\infty$, and
$$
\frac{|\mathcal{T}_{\gamma_*,\gamma}|}{N^{1-(\tau-1)\gamma_*}}
\in[\frac15,5],\quad\mbox{a.a.s.}
$$
On the other hand, 
$$
\frac{L_N/N}{\E{\Lambda}}\in[\frac12,2]\quad\mbox{a.a.s.}
$$
Thus, a.a.s.,
\begin{eqnarray*}
\log n&\in&(1-(\tau-1)\gamma_*)\log N +[-\log 4,\log 4],\\
\log(np_*)&\in&(3-\tau)\gamma_*\log N-\log\E{\Lambda}+
[-\log 8,\log 8],\\
\log(np^*)&\in&(2\gamma-(\tau-1)\gamma_*)\log N-\log\E{\Lambda}+
[-\log 8,\log 8].
\end{eqnarray*}
Note that $p^*(N)\to0$ and $n(N)p_*(N)\to\infty$, so Theorem
\ref{chungluthm} applies to $G_{n,p_*}$ and $G_{n,p^*}$. We
distinguish between two cases. 

First, if $\gamma(N)$ is bounded away from zero, we are in fact in the
regime of Theorem \ref{bollobasthm}, and a computation of the diameter
$d$ leads to the expression of $w(\gamma)$. Thus, Theorem
\ref{bollobasthm} yields the inequalities
\begin{eqnarray*}
\mathrm{diam}(\mathcal{T}_{\gamma_*,\gamma})
&\le&\left\lceil\frac{1-(\tau-1)\gamma_*}{(3-\tau)\gamma_*}
  \right\rceil\quad\mbox{a.a.s.},\\
\mathrm{diam}(\mathcal{T}_{\gamma_*,\gamma})
&\ge&\left\lceil\frac{1-(\tau-1)\gamma_*}{2\gamma+(1-\tau)\gamma_*}
  \right\rceil\quad\mbox{a.a.s.}
\end{eqnarray*}
Now, the right hand sides are with sufficiently large $N$ both equal
to the right hand side of $w(\gamma)$, and the claim follows. The
value of $\delta$ did not matter.

The second case is that $\gamma(N)\searrow0$. Since
$n(N)p_*(N)\to\infty$, the subgraph still has a giant component whose
proportional size approaches one, although it is not necessarily
connected. Choose any sequence $\delta_k\searrow0$, and denote
$$
\eta_k=\max\left\{\frac{(3-\tau)\gamma}{(3-\tau)\gamma_{\delta_k}},
\frac{2\gamma+(\tau-1)\gamma_{\delta_k}}{(3-\tau)\gamma}\right\}.
$$
Note that $\eta_k\to1$. Now define a sequence $N_k$ as
\begin{eqnarray*}
N_k&=&\inf\bigg\{M:\ \pr{|\mathcal{T}_{\gamma_{\delta_k},\gamma}|
  \frac{N^{2\gamma_{\delta_k}}}{L_N}\ge k}\ge1-1/k\quad \forall N\ge M,\\
&&\pr{\mathrm{diam}(\mathcal{T}_{\gamma_*,\gamma})\in
  (\eta_k^{-2},\eta_k^2)\cdot\frac{1}{(3-\tau)\gamma}}\ge1-1/k\quad 
  \forall N\ge M\bigg\}.
\end{eqnarray*}
Thanks to Theorem \ref{chungluthm}, the $N_k$'s are finite numbers,
and obviously they grow unboundedly. Finally, denote 
$$
k(N):=\max\eset{k}{N_k\le N}
$$ 
and choose $\gamma_*(N)=\gamma_{\delta_{k(N)}}$. Then obviously
$$
\mathrm{diam}(\mathcal{T}_{\gamma_*,\gamma})=
(1+o(1))\frac{1}{(3-\tau)\gamma(N)}\quad\mbox{a.a.s.}
$$
\end{proof}

In a similar way, we can prove a diameter result for the tiers
$V_k=V_k^{(N)}$, defined at the end of Section \ref{modelsec}:

\begin{proposition}\label{coretierdiamprop}
The tiers $V_0,\ldots,V_{k^*}$, considered as induced subgraphs of
$G_N$, are almost connected in the sense that the relative sizes of their
largest components approach 1 as $N\to\infty$, and their diameters
satisfy, a.a.s.,
\begin{eqnarray*}
\mathrm{diam}(V_0)&=&0,\\
\mathrm{diam}(V_1)&=&2,\\
\mathrm{diam}(V_k)&\in&(1+o(1))[w(\beta_{k-1}),w(\beta_k)],\quad k=2,\ldots,k^*.
\end{eqnarray*}
\end{proposition}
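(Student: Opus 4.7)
The plan is to treat $V_0$, $V_1$, and $V_k$ with $k\ge2$ separately. For the two smallest cases the argument is essentially by hand; for $V_k$ with $k\ge2$ the induced subgraph on the tier is sandwiched between classical Erd\"os--R\'enyi graphs, and Theorem \ref{bollobasthm} or Theorem \ref{chungluthm} (depending on whether $\beta_{k-1}$ is bounded away from zero or tends to zero) provides the diameter bounds, in the manner of Proposition \ref{thintierdiamprop}.

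The case $V_0=\{i^*\}$ gives diameter $0$ trivially. For $V_1$ the natural route is a universal-hub argument through the second-largest-capacity vertex $i^{**}$. A.a.s.\ $\Lambda_{i^{**}}=N^{1/(\tau-1)-o(1)}$ and $i^{**}\in V_1$. For any other $v\in V_1$ the Poisson mean $\Lambda_v\Lambda_{i^{**}}/L_N$ of edges between $v$ and $i^{**}$ is at least of order $N^{\beta_1+1/(\tau-1)-1-o(1)}=N^{2\epsilon(N)-o(1)}$, which tends to infinity by (\ref{Nepstoinfty}); hence $\pr{v\not\sim i^{**}}$ decays faster than any polynomial in $N$, and a union bound over the $N^{O(1)}$ vertices of $V_1$ shows that $i^{**}$ is adjacent to every other vertex of $V_1$ a.a.s. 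This yields both almost-connectedness and $\mathrm{diam}(V_1)\le2$; the matching lower bound $\mathrm{diam}(V_1)\ge2$ is immediate since a typical pair of bottom vertices of $V_1$ has edge probability of order $N^{2\beta_1-1}\to0$, so $V_1$ is not complete.

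For $V_k$ with $k\ge2$, set $n=|V_k|$ (of order $N^{1-(\tau-1)\beta_k}$ by Lemma \ref{tiervolumelemma}), $p_*=N^{2\beta_k}/L_N$, and $p^*=N^{2\beta_{k-1}}/L_N$, so that every pairwise edge probability in the induced subgraph on $V_k$ lies in $[p_*,p^*]$. Coupling $V_k$ with the sparser comparison graph $G_{n,p_*}$ and applying Theorem \ref{chungluthm} yields $\mathrm{diam}(V_k)\le(1+o(1))\log n/\log(np_*)=(1+o(1))w(\beta_k)$ a.a.s., together with almost-connectedness since $np_*=N^{(3-\tau)\beta_k+o(1)}\to\infty$. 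For the matching lower bound the cleanest route is not the opposite sandwich with $G_{n,p^*}$ (see below) but rather to exploit the thin top sub-tier of $V_k$: by Proposition \ref{thintierdiamprop} applied with $\gamma=\beta_{k-1}$, the set $\mathcal{T}_{\beta_{k-1}-\epsilon(N),\beta_{k-1}}\subset V_k$ has induced diameter $(1+o(1))w(\beta_{k-1})$, and two extremal vertices $u,v$ there remain at distance $\ge(1-o(1))w(\beta_{k-1})$ in the full $V_k$ because a first-moment computation (reducing via (\ref{upsetlemeq1}) to Lemmas \ref{uppersetvolumelemma}--\ref{lowtierbiglemma}) shows the expected number of shorter shortcut paths through $V_k\setminus\mathcal{T}_{\beta_{k-1}-\epsilon(N),\beta_{k-1}}$ to be $o(1)$.

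The main obstacle is this lower bound $\mathrm{diam}(V_k)\ge(1+o(1))w(\beta_{k-1})$: the direct sandwich with $G_{n,p^*}$ does not reproduce $w(\beta_{k-1})$ after substituting the recursion $\beta_k=(\tau-2)\beta_{k-1}+\epsilon(N)$ (the ratio $\log n/\log(np^*)$ simplifies to $(1-(\tau-1)(\tau-2)\beta_{k-1})/(\tau(3-\tau)\beta_{k-1})$ rather than to $w(\beta_{k-1})$), so the bound must be extracted from the diameter of the thin top sub-tier together with a no-shortcut first-moment estimate. The estimate naturally splits into two regimes --- $\beta_{k-1}$ bounded away from zero, where $w(\beta_{k-1})$ is a constant and the lower bound is essentially trivial, and $\beta_{k-1}\to 0$, where the shortcut probabilities vanish because of the gap between $\beta_k$ and $\beta_{k-1}$ --- and the bookkeeping of these moment sums is the computational burden of the proof.
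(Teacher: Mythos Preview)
The paper offers no proof of this proposition beyond the remark that it follows ``in a similar way'' to Proposition~\ref{thintierdiamprop}. Your sandwich between $G_{n,p_*}$ and $G_{n,p^*}$ is precisely what that remark points to, and it disposes of $V_0$, $V_1$, and the upper bound $(1+o(1))w(\beta_k)$ for $V_k$, $k\ge2$.

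You are also right that the dense comparison does \emph{not} yield the stated lower endpoint: with $\gamma_*=\beta_k$ and $\gamma=\beta_{k-1}$ the lower sandwich bound from the proof of Proposition~\ref{thintierdiamprop} becomes
\[
\frac{1-(\tau-1)\beta_k}{2\beta_{k-1}-(\tau-1)\beta_k}
\;\sim\;\frac{1}{\tau(3-\tau)\beta_{k-1}},
\]
a factor $\tau$ short of $w(\beta_{k-1})\sim 1/((3-\tau)\beta_{k-1})$. This is a genuine gap that the paper's one-line pointer glosses over, and your proposal does more than the paper by confronting it.

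Your first-moment route for the lower bound is sound, and in fact simpler than you suggest: the detour through Proposition~\ref{thintierdiamprop} on the top sub-tier and the search for ``extremal'' vertices are unnecessary. Pick any two vertices $u,v\in\mathcal{T}_{\beta_{k-1}-\epsilon(N),\beta_{k-1}}$. Conditionally on $\mathbf{\Lambda}$, the expected number of $u$--$v$ paths of length $d$ inside $V_k$ is at most
\[
\frac{\Lambda_u\Lambda_v}{L_N^{\,d}}\Bigl(\sum_{i\in V_k}\Lambda_i^2\Bigr)^{d-1}.
\]
An estimate parallel to Lemma~\ref{uppersetvolumelemma} (with $\Lambda_i^2$ in place of $\Lambda_i$) gives $\sum_{i\in V_k}\Lambda_i^2\asymp N^{1+(3-\tau)\beta_{k-1}}$ a.a.s.; note that the second-moment sum is dominated by the \emph{top} of the tier, not the bottom. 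The expected path count is then of order $N^{-1+\beta_{k-1}((3-\tau)d+\tau-1)}$, which is $o(N^{-c})$ for some $c>0$ whenever $d\le(1-\delta)w(\beta_{k-1})$. Summing over $d\le\log N$ and union-bounding over the $O(\log\log N)$ tiers is harmless; since such $u,v$ lie in the giant component of $V_k$ a.a.s.\ (via the $G_{n,p_*}$ comparison), the lower bound $(1+o(1))w(\beta_{k-1})$ follows directly.
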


By Proposition \ref{thintierdiamprop}, we can say that the width of
the core at height $N^\gamma$, measured by hop distance, is
$w(\gamma)$. Moreover, this holds also for varying $\gamma=\gamma(N)$
downto the bottom of the core. 

When $\gamma\in(0,\frac12)$, we have
$$
w((\tau-2)\gamma)>w(\gamma)+2.
$$
This has the following heuristic consequence. For connecting two
vertices with capacities $\approx N^{\gamma}$, a `horizontal' path
staying at about the same height in the core is longer than a path
that jumps at both ends with one hop to the height
$N^{(\tau-2)\gamma}$ and finds the horizontal connection at that
level. On the other hand, this is how high in the core the
neighborhood of a vertex typically reaches. Thus, a horizontal move
should be made at as high level as possible. 

\section{The main result}\label{fullpathsec}

We have now essentially collected all elements needed to understand
the situation where a top part of the core is deleted. First, since
the aggregated capacity of any top part is concentrated at its bottom,
the remains of the core are found from outside almost as easily as the
original core was found. Second, if a lower part of the core is alive,
we can find `vertical' paths from the bottom of the core to a top tier
of its remaining part, as we did with the undamaged core in
\cite{norrosreittu06} (a complete proof of this step requires a
slight modification of the proof of Proposition 4.3 of
\cite{norrosreittu06} and is presented below in detail). Third,
Proposition \ref{thintierdiamprop} guarantees the existence of a
`horizontal' path between almost any pair of vertices within this top
tier, and provides also an explicit upper bound for its length. It
remains to couple these pieces together.

\begin{theorem}\label{robustnessthm}
Let $\gamma:=\gamma(N)\in(\epsilon(N),\frac12)$ be such that
$\gamma(N)$ is non-increasing and $\gamma(N)/\epsilon(N)$
non-decreasing w.r.t.\ $N$. Denote by $H_\gamma=H^{(N)}$ the graph
obtained from $G_N$ by deleting all vertices with capacity higher than
$N^\gamma$, together with the edges attached to them. The following
hold:
\begin{enumerate}
\item\label{giantstableclaim} The graph $H_\gamma$ has a.a.s.\ a giant
component whose relative size approaches the relative size of the
giant component of $G_N$.
\item\label{prolongationclaim}  
The distance of two randomly chosen vertices of the giant component
of $H_\gamma$ is a.a.s.\ less than 
\begin{equation}
\label{asymptdistance}
(1+o(1))\left(\frac{2}{-\log(\tau-2)}\left(
\log\log N-\log\frac{1}{\gamma}\right)
+w(\gamma)\right),
\end{equation}
where $w(\gamma)$ is given in (\ref{w(gamma)def}).
\end{enumerate}
\end{theorem}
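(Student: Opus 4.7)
The plan is to prove the two claims of Theorem~\ref{robustnessthm} by assembling the three ingredients indicated in the paragraph preceding its statement. For claim~\ref{giantstableclaim}, I would first observe that Lemma~\ref{uppersetvolumelemma} makes both $|\{i:\Lambda_i>N^\gamma\}|$ and $\sum_{\Lambda_i>N^\gamma}\Lambda_i$ equal to $o(N)$ a.a.s., so the deleted set is negligible in cardinality and aggregated capacity alike. Coupling the local neighbourhoods of $H_\gamma$ to the marked branching process of Propositions~\ref{couplingprop1}--\ref{couplingprop2}, but with all marks of capacity exceeding $N^\gamma$ forbidden, yields a truncated offspring distribution; since $N^\gamma\to\infty$, this distribution converges weakly, together with its mean and survival probability, to $\pi^*$, so the limiting formula~(\ref{giantsize2}) persists for $H_\gamma$.

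For claim~\ref{prolongationclaim} I would introduce the reparameterised tier sequence $\tilde\beta_0(N)=\gamma(N)$, $\tilde\beta_j(N)=(\tau-2)\tilde\beta_{j-1}(N)+\epsilon(N)$, stopped at the least $\tilde k^*(N)$ for which $\tilde\beta_{\tilde k^*}$ is of the order of $\beta_{k^*}(N)$. Elementary iteration of the recursion yields
\[
\tilde k^*(N)=(1+o(1))\,\frac{\log\log N-\log(1/\gamma(N))}{-\log(\tau-2)},
\]
and the associated tiers $\tilde V_k:=\mathcal{T}_{\tilde\beta_k,\tilde\beta_{k-1}}$, $k=1,\ldots,\tilde k^*$, all sit inside $\{\Lambda_i\le N^\gamma\}$. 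The proof of claim~\ref{coreheightclaim} of Theorem~\ref{loglogthm} then transfers verbatim to this reparameterisation: by Lemma~\ref{tiervolumelemma} the aggregated capacity of $\tilde V_{k-1}$ is of order $N^{1-(\tau-2)\tilde\beta_{k-1}}$, so a vertex of $\tilde V_k$ has expected degree of order $N^{\epsilon(N)}\to\infty$ into $\tilde V_{k-1}$, and a union bound over the $\tilde k^*=O(\log\log N)$ tiers shows that a random vertex of $\tilde V_{\tilde k^*}$ a.a.s.\ admits an ascending path of length $\tilde k^*$ terminating in $\tilde V_1$. This is precisely the slight modification of Proposition~4.3 of \cite{norrosreittu06} promised in the text.

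It then remains to couple the pieces. Proposition~\ref{findcoreprop}, invoked with a suitable constant $b$, connects a random vertex of the giant component of $H_\gamma$ to a vertex near the bottom of the surviving core in $o(\log\log N)$ hops via a path that avoids all deleted vertices; prolonging by the ascending path produces a vertex $u\in \tilde V_1$ reached in $\tilde k^*(N)+o(\log\log N)$ steps, and the same construction applied to the second random vertex yields $u'\in \tilde V_1$. Proposition~\ref{thintierdiamprop} now supplies a tier $\mathcal{T}_{\gamma_*,\gamma}\subset \tilde V_1$ (for large $N$ the required $\gamma_*>\tilde\beta_1=(\tau-2)\gamma+\epsilon(N)$ is automatic since $\gamma_*/\gamma\to 1>\tau-2$) of a.a.s.\ diameter $(1+o(1))w(\gamma)$, and summing the contributions of the three segments produces~(\ref{asymptdistance}). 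The main obstacle is ensuring that $u$ and $u'$ indeed lie in the large subcomponent of $\mathcal{T}_{\gamma_*,\gamma}$ on which Proposition~\ref{thintierdiamprop} controls the diameter: because the last edge of the vertical climb lands in $\tilde V_1$ with probability size-biased by capacity rather than uniformly, one must show that this subcomponent carries a $1-o(1)$ fraction of the \emph{total capacity}, not merely of the cardinality, of $\tilde V_1$; this is most delicate in the regime $\gamma(N)\searrow 0$, where Proposition~\ref{thintierdiamprop} itself yields only asymptotic diameter control.
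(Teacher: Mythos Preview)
Your plan is essentially the paper's own argument, and the computation of $\tilde k^*$ matches the paper's step~$4^{\mathrm{o}}$. Two points of comparison are worth noting.

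First, the paper does not argue claim~\ref{giantstableclaim} separately via a truncated branching process. Instead it proves both claims simultaneously: the constructed path from one random vertex of the $G_N$-giant to the other lies entirely in $H_\gamma$, so the $G_N$-giant minus $o(N)$ deleted vertices stays connected inside $H_\gamma$. Your branching-process route is a legitimate alternative and makes \ref{giantstableclaim} logically independent of the path construction, but it is more work than the paper actually does.

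Second, and more substantively, the paper's tier sequence differs from yours at the very top: it sets $\gamma_1=\gamma-\epsilon(N)$ (a \emph{thin} top tier) and only thereafter applies the recursion $\gamma_{k+1}=(\tau-2)\gamma_k+\epsilon(N)$. The ascending path therefore lands directly in $\mathcal{T}_{\gamma-\epsilon(N),\gamma}$, and Theorem~\ref{chungluthm} is invoked on this tier straight away, with no need to pass to a further subtier $\mathcal{T}_{\gamma_*,\gamma}$. This sidesteps the issue you flag of $u,u'$ possibly missing $\mathcal{T}_{\gamma_*,\gamma}$ altogether. The obstacle you correctly identify---that the landing vertex must fall in the \emph{giant} component of the thin top tier---is present in the paper too, and the paper asserts it in one line (``$I_{\bar{k}-1}$ and $I_{\bar{k}-1}'$ belong a.a.s.\ to the largest component of the induced subgraph $\mathcal{T}_{\gamma_1,\gamma_0}$'') without further justification. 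So your diagnosis of the delicate point is accurate; the paper's device of a thin top tier removes one layer of the difficulty but does not fully resolve it either.
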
 

\begin{proof}
$1^{\mathrm{o}}$. Denote
\begin{eqnarray*}
\gamma_0&=&\gamma,\\
\gamma_1&=&\gamma_0-\epsilon(N),\\
\gamma_{k+1}&=&(\tau-2)\gamma_k+\epsilon(N),\quad k=1,2,\ldots,
\end{eqnarray*}
and let 
$$
\bar{k}=\min\eset{k\ge1}{\gamma_k\le\frac{4-\tau}{3-\tau}\epsilon(N)}.
$$
It is easy to check (see $4^{\mathrm{o}}$ below) that
$\bar{k}=O(\log\log N)$. 

Consider the tiers $\mathcal{T}_{\gamma_1,\gamma_0}$,
$\mathcal{T}_{\gamma_2,\gamma_1}$,\ldots,
$\mathcal{T}_{\gamma_{\bar{k}},\gamma_{\bar{k}-1}}$. The idea of the
proof is to apply the diameter result, Theorem \ref{chungluthm}, on
$\mathcal{T}_{\gamma_1,\gamma_0}$, and, if $\bar{k}\ge2$, to imitate
our proof of Theorem \ref{loglogthm} to show that vertices of tier
$\mathcal{T}_{\gamma_{\bar{k}},\gamma_{\bar{k}-1}}$ are with high
probability connected to $\mathcal{T}_{\gamma_1,\gamma_0}$ with a path
jumping from tier to tier.

$2^{\mathrm{o}}$. Assume that $\bar{k}\ge2$. Let $I_0=I_0(N)$ be a
stochastic (not necessarily uniformly random) vertex of
$\mathcal{T}_{\gamma_{\bar{k}},\gamma_{\bar{k}-1}}$ such that,
conditioned on $\mathbf{\Lambda}$, $I_0$ is independent of the edges
within
$\bigcup_1^{\bar{k}}\mathcal{T}_{\gamma_j,\gamma_{j-1}}$. Define the
sequence of vertices $I_1$, $I_2$,\ldots recursively as follows. If
$I_n\in\mathcal{T}_{\gamma_{\bar{k}-n},\gamma_{\bar{k}-n-1}}$, then
$I_{n+1}$ is a neighbor (say, with smallest index) of $I_n$ belonging
to $\mathcal{T}_{\gamma_{\bar{k}-n-1},\gamma_{\bar{k}-n-2}}$ if such a
neighbor exists. Otherwise $I_{n+1}=I_n$, and the rest of the sequence
repeats $I_n$ as well.  Denote
\begin{eqnarray*}
A_n&=&\set{I_n\in\mathcal{T}_{\gamma_{\bar{k}-n},\gamma_{\bar{k}-n-1}}},
\quad n=0,1,\ldots,\bar{k}-1,\\
B'&=&\set{\frac{L_N}N\le2\E{\Lambda}},\\
B_k&=&\set{L(\mathcal{T}_{\gamma_{k+1},\gamma_{k}})
  \in\E{\Lambda}N^{1-(\tau-2)\gamma_k}(\frac15,5)},
 \quad k=0,\ldots,\bar{k}-1,\\
&&\mbox{where }L(S):=\sum_{i\in S}\Lambda_i,\\
B&=&B'\cap B_0\cap\cdots\cap B_{\bar{k}-1}.
\end{eqnarray*}
We have $A_{\bar{k}-1}\subseteq A_{\bar{k}-2}\subseteq\cdots \subseteq
A_0$. Proposition \ref{tiervolumelemma} yields
$$
\pr{B_0\cap\cdots\cap B_{\bar{k}-1}}\ge
1-2\bar{k}\left(\frac{\E{\Lambda}}{2}\right)^{-\tau+1}
  N^{-(1-(\tau-1)\gamma)(\tau-2)}\to1
$$
as $N\to\infty$. On $B$ in turn, we have for each $n$
by Proposition \ref{tiervolumelemma} that
\begin{eqnarray*}
\cPr{A_{n+1}^c}{\mathbf{\Lambda},A_n}
&=&\cE{\exp\left(-\frac{\Lambda_{I_n}
  L(\mathcal{T}_{\gamma_{\bar{k}-n-1},\gamma_{\bar{k}-n-2}})}{L_N}
  \right)}{\mathbf{\Lambda},A_n}\\
&\le&\exp\left(-\frac1{2\E{\Lambda}N}N^{\gamma_{\bar{k}-n}}\cdot\frac1{5}
    N^{1-(\tau-2)\gamma_{\bar{k}-n-1}}\right)\\
&\le&\exp\left(-\frac{e^{\ell(N)}}{10\E{\Lambda}}\right).
\end{eqnarray*}
It follows that on $B$
\begin{eqnarray*}
\cPr{A_{\bar{k}-1}}{\mathbf{\Lambda},A_0}&=&
  \prod_{n=0}^{\bar{k}-1}\cPr{A_{n+1}^c}{\mathbf{\Lambda},A_n}\\
&\ge&\left(1-\exp\left(-\frac{e^{\ell(N)}}{10\E{\Lambda}}\right)\right)^{\bar{k}}\\
&\ge&1-\bar{k}\exp\left(-c e^{\ell(N)}\right)\\
&=&1-\frac{\bar{k}}{\log^2 N}\exp\left(\log^3N-c(e^{\log^4N})^{\ell(N)/\log^4N}
  \right)\\
&=&1-\frac{\bar{k}}{\log^2 N}
  \exp\left(-\log^3N\left(c(\log^3N)^{\ell(N)/\log^4N-1}-1\right)\right)\\
&\to&1\quad\mbox{as }N\to\infty
\end{eqnarray*}
by the assumption that $\ell(N)/\log^4N\to\infty$.

$3^{\mathrm{o}}$. Now let $I$ and $I'$ be two independent uniformly
randomly chosen vertices of $G_N$. By Proposition \ref{findcoreprop},
$I$ is a.a.s.\ either outside the largest component of $G_N$, or it is
connected to $\mathcal{T}_{\gamma_{\bar{k}},\gamma_{\bar{k}-1}}$ with
a path that avoids the set $\eset{i\le
N}{\Lambda_i>N^{\gamma_{\bar{k}-1}}}$ and whose length is at most
$\kappa(N)$. Denote the endpoint of that path by $I_0$. In the case
that $\bar{k}\ge2$, step $2^{\mathrm{o}}$ above showed that $I_0$ is
a.a.s.\ connected to a vertex
$I_{\bar{k}-1}\in\mathcal{T}_{\gamma_1,\gamma_0}$ over a path of
length $\bar{k}$. The same claims holds for $I'$, with vertices $I_0'$
and $I_{\bar{k}-1}'$ respectively. Now, $I_{\bar{k}-1}$ and
$I_{\bar{k}-1}'$ belong a.a.s.\ to the largest component of the
induced subgraph $\mathcal{T}_{\gamma_1,\gamma_0}$, and Theorem
\ref{chungluthm} yields that their distance is at most
$(1+o(1))w(\gamma)$.

$4^{\mathrm{o}}$. We have now shown that if $I$ and $I'$ both belong
to the largest component of $G_N$, their distance is a.a.s.\ at most 
$$
(1+o(1))(2\bar{k}+w(\gamma)).
$$
It remains to show that $\bar{k}$ can in this expression be replaced
by
\begin{equation}
\label{barksubstitute}
\frac{\log\log N-\log(1/\gamma)}{-\log(\tau-2)}.
\end{equation}
First, if $\gamma(N)\le C\epsilon(N)$ for all $N$ and some finite
$C>1$, then
$$
\log\frac{1}{\gamma}\in (\log\log N-\log\ell(N))+[-\log C,0],
$$ 
and (\ref{barksubstitute}) is in this region negligible compared with
$w(\gamma)$. On the other hand, $\bar{k}$ is in this region limited by
a constant, whereas $w(\gamma(N))\to\infty$. Thus, the assertion of
the theorem holds.

Assume then that $\gamma(N)/\epsilon(N)$ is unbounded. Then
$\gamma_1>((4-\tau)/(3-\tau))\epsilon(N)$ for large $N$, so that
$\bar{k}\ge2$. Since
$$
\gamma_{k+1}=(\tau-2)^k\gamma_1+\epsilon(N)\sum_{i=0}^{k-1}(\tau-2)^i,
$$
a short computation (using the geometric series formula) yields that
the condition $\gamma_k\le((4-\tau)/(3-\tau))\epsilon(N)$ is
equivalent to the condition
$$
(\tau-2)^k\left(\gamma_1-\frac{\epsilon(N)}{3-\tau}\right)\le\epsilon(N),
$$
and further to
\begin{eqnarray*}
k&\ge&\frac{1}{-\log(\tau-2)}\left(\log\log N
  +\log\left(\gamma(N)-\frac{4-\tau}{3-\tau}\epsilon(N)\right)
  -\log\ell(N)\right)\\
&=&(1+o(1))\frac{\log\log N-\log(1/\gamma(N))}{-\log(\tau-2)}.
\end{eqnarray*}
Thus, $\bar{k}$ equals the last, asymptotic expression. This finishes the proof.
\end{proof}

Theorem \ref{robustnessthm} tells that removing a top part of the
core downto $N^\gamma$ with fixed $\gamma>0$ has no effect to the
asymptotic upper bound (\ref{asymptdistance}). On the other hand, with
$\gamma(N)\propto\epsilon(N)$ we obtain
$$
w(\gamma(N))\propto\frac{1}{(3-\tau)\epsilon(N)}=\frac{\log
N}{(3-\tau)\ell(N)}.
$$
Combining this with Proposition \ref{findcoreprop}, we find that if
the whole core is removed, the remaining graph still has a giant
component with same proportional size as the original and diameter
proportional to $\log(N)/\ell(N)$ --- slightly smaller than that of a
(supercritical) power-law graph with $\tau>3$. Asymptotics of
intermediate cases between these two extremes can be computed from
(\ref{findcoreprop}) as well.

\bibliographystyle{plain}
\bibliography{/home/norros/biblio/mat,/home/norros/biblio/atm}

\end{document}